\newtheorem{Thm}{Theorem}
\newtheorem{Prop}[Thm]{Proposition}
\newtheorem{Cor}[Thm]{Corollary}
\newtheorem*{definition*}{Definition}
\numberwithin{equation}{section}
\def\pv#1{\ensuremath{{\sf#1}}}
\def\Cl#1{\ensuremath{\mathcal #1}}
\def\Om#1#2{\ensuremath{\overline\Omega_{#1}{\pv{#2}}}}
\def\tilom#1{\ensuremath{\widetilde{\Omega}_{\omega}{\pv{#1}}}}
\let\op=\llbracket
\let\cl=\rrbracket
\let\ge\geqslant
\title[Finite basis and finite rank for pseudovarieties of semigroups]
{A note on the finite basis and finite rank properties for pseudovarieties of semigroups}
\author{J. Almeida \and M. H. Shahzamanian}
\address{Centro de Matem\'atica e Departamento de Matem\'atica,
  Faculdade de Ci\^encias, Universidade do Porto, Rua do Campo Alegre,
  687, 4169-007 Porto, Portugal.
  \href{mailto:jalmeida@fc.up.pt}{jalmeida@fc.up.pt};
  \href{m.h.shahzamanian@fc.up.pt}{m.h.shahzamanian@fc.up.pt}}
\begin{document}

\begin{abstract}
  The finite basis property is often connected with the finite rank
  property, which it entails. Many examples have been produced of
  finite rank varieties which are not finitely based. In this note, we
  establish a result on nilpotent pseudovarieties which yields
  many similar examples in the realm of pseudovarieties of semigroups.
\end{abstract}

\keywords{pseudovarieties, free profinite semigroups,
  pseudoidentities, finite basis, finite rank}

\subjclass[2010]{primary 20M07; secondary 08B05}

\maketitle

Recall that a (pseudo)variety is a class of (finite) algebras of a
given type which is closed under the operators of taking homomorphic
images, subalgebras and (finitary) direct products. By theorems of
Birkhoff~\cite{Birkhoff:1935} and Reiterman~\cite{Reiterman:1982}, we
may define a (pseudo)variety by a set of (pseudo)identities, which is
called a \emph{basis}. We say that a (pseudo)variety is finitely based
if it admits a finite basis of (pseudo)identities. The reader is
referred to~\cite{Almeida:1994a} for more details and basic
definitions on this topic.

An algebra $S$ is said to be \emph{$n$-generated} if there is some
subset of~$S$ with at most $n$~elements that generates $S$.
A pseudovariety \pv V is said to \emph{have rank at most
  $n$} if, whenever a finite algebra $S$ is such that all its
$n$-generated subalgebras belong to~\pv V, so does $S$. If there is
such an integer $n\ge0$, then \pv V is said to have \emph{finite rank}
and the minimum value of $n$ for which \pv V has rank at most $n$ is
called the \emph{rank} of~\pv V. If there is no such $n$, then \pv V
is said to have \emph{infinite rank}.

Similarly, a variety \Cl V of algebras is said to \emph{have rank at
  most $n$} if, whenever an algebra $S$ is such that all its
$n$-generated subalgebras belong to~\Cl V, so does $S$. The related
notions of finite and infinite rank are defined as for
pseudovarieties.

By \cite[Proposition IV.3.9]{Cohn}, we have the variety case of the
following proposition whose proof works as well for pseudovarieties.

\begin{Prop}
  \label{p:folklore}
  A (pseudo)variety has rank at most $n$ if and only if it admits a
  basis of (pseudo)identities in at most $n$ variables.
\end{Prop}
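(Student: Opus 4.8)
The plan is to establish the two implications separately, the common thread being that a (pseudo)identity in at most $n$ variables concerns only the $n$-generated subalgebras. Fix once and for all the alphabet $\{x_1,\dots,x_n\}$, which is harmless since the validity of a (pseudo)identity is unaffected by renaming its variables.

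\emph{A basis in $n$ variables implies rank at most $n$.} Suppose \pv V is defined by a set $\Sigma$ of (pseudo)identities, each involving at most $n$ variables, and let $S$ be a finite algebra all of whose $n$-generated subalgebras lie in \pv V. I would check directly that $S\models\Sigma$. Write a given $\sigma\in\Sigma$ as $u=v$, where $u$ and $v$ involve only $x_1,\dots,x_k$ for some $k\le n$. Given $s_1,\dots,s_k\in S$ — in the profinite case, given a continuous homomorphism $\varphi$ from the free profinite algebra on $x_1,\dots,x_k$ into $S$ with $\varphi(x_i)=s_i$ — the two elements of $S$ produced by evaluating the sides of $\sigma$ at $s_1,\dots,s_k$ both lie in, and are computed inside, the subalgebra $T=\langle s_1,\dots,s_k\rangle$ (in the profinite case because the image of $\varphi$ is precisely this finite subalgebra). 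Since $T$ is $n$-generated, $T\in\pv V$, so $T\models\sigma$ and the two elements coincide. As $\sigma$ and $s_1,\dots,s_k$ were arbitrary, $S\models\Sigma$, that is $S\in\pv V$; hence \pv V has rank at most $n$.

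\emph{Rank at most $n$ implies a basis in $n$ variables.} Suppose \pv V has rank at most $n$. Let $\Sigma_n$ be the set of all (pseudo)identities over $x_1,\dots,x_n$ that hold in \pv V, and let $\pv W=\op\Sigma_n\cl$. Clearly $\pv V\subseteq\pv W$, so it remains to prove $\pv W\subseteq\pv V$. Let $S\in\pv W$ be finite; by the rank hypothesis it suffices to show that every $n$-generated subalgebra $T\le S$ belongs to \pv V. Since $T\le S\in\pv W$ we have $T\models\Sigma_n$. Pick generators $t_1,\dots,t_n$ of $T$; by the universal property of the free (profinite) object on $x_1,\dots,x_n$ there is a (continuous) surjective homomorphism $\psi$ from that object onto $T$ sending $x_i$ to $t_i$. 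Now $\Sigma_n$, viewed as a set of pairs, is exactly the congruence $\eta$ such that the free (profinite) object on $x_1,\dots,x_n$ modulo $\eta$ is the relatively free object $R$ — namely $\Om{n}{V}$ for pseudovarieties, and the relatively free algebra $F_{\pv V}(x_1,\dots,x_n)$ for varieties. Hence $T\models\Sigma_n$ forces $\eta\subseteq\ker\psi$, so $\psi$ factors through $R$ and $T$ is a (continuous) homomorphic image of $R$. For varieties $R\in\pv V$, whence $T\in\pv V$. For pseudovarieties $R$ is pro-\pv V, and a finite continuous homomorphic image of a pro-\pv V algebra again lies in \pv V, being a quotient of a subalgebra of some finite member of \pv V occurring in an inverse-limit representation of $R$; again $T\in\pv V$. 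Therefore $S\in\pv V$, proving $\pv W\subseteq\pv V$, so $\Sigma_n$ is the sought basis in at most $n$ variables.

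\emph{The main obstacle.} The delicate point is the second implication, and within it the passage from the hypothesis that $T$ is $n$-generated and satisfies every $\le n$-variable (pseudo)identity of \pv V to the conclusion that $T$ is a (continuous) homomorphic image of the relatively free object on $n$ generators. This rests on identifying $\Sigma_n$ with the defining congruence of $\Om{n}{V}$ — equivalently, on the fact that a (pseudo)identity over $x_1,\dots,x_n$ holds in \pv V if and only if it holds in $\Om{n}{V}$ — together with, in the profinite setting, the standard fact that finite continuous homomorphic images of $\Om{n}{V}$ belong to \pv V (see~\cite{Almeida:1994a}). Granting these, everything else is routine bookkeeping; the first implication needs only the elementary remark that a (pseudo)identity in $k\le n$ variables is evaluated entirely within $k$-generated subalgebras.
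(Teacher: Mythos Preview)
Your argument is correct and is the standard proof; the paper itself does not supply one but merely cites \cite[Proposition~IV.3.9]{Cohn} for the variety case and observes that the same reasoning carries over to pseudovarieties. One minor slip: in both directions you restrict to \emph{finite} $S$, which is appropriate for pseudovarieties but not for varieties---however, nothing in your argument actually uses finiteness of $S$ in the variety case, so this is only a wording issue.
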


In view of Proposition~\ref{p:folklore}, the rank is also known as
\emph{axiomatic rank}. Proposition~\ref{p:folklore} says, in
particular, that if a (pseudo)variety is finitely based then it has
finite rank.
It is easy to see that the two properties are equivalent for a
(pseudo)variety generated by a single finite algebra: indeed, if the
rank is $n$ then the diagram of the relatively free algebra on $n$
free generators constitutes a finite basis of identities.
This is why most proofs that concrete finite algebras are infinitely
based, in the sense that they generate infinitely based
(pseudo)varieties, establish directly that the generated
(pseudo)varieties have infinite rank. A case which has deserved a lot
of attention is that of finite semigroups, where the main open
question is whether one can effectively decide whether a given finite
semigroup is finitely based~\cite{Volkov:2001}. Another case of
interest is that of varieties generated by a semigroup given by a
one-relator presentation, for which again the two properties turn out
to be equivalent~\cite{Shneerson:1989}.

The two properties, having finite rank and being finitely based, are
not, in general, equivalent. Examples of varieties of semigroups which
have finite rank and are infinitely based are well known
(see for example Tables 1.3 and 1.4 in \cite{She-Vol}). Many such
examples, as those in \cite[Table~1.3]{She-Vol}, are actually of
non-finitely based systems of balanced identities involving a (small)
finite number of variables.

Recall that $\pv N=\op x^\omega=0\cl$ is the pseudovariety of all
finite \emph{nil} (or \emph{power nilpotent}) semigroups. The purpose
of this note is to establish the theorem below which shows how to
obtain infinitely based pseudovarieties of nil semigroups from
infinitely based varieties of semigroups defined by balanced
identities.

A major difference between dealing with identities and
pseudoidentities is the absence of an analog for pseudoidentities of
the completeness theorem for equational
logic~\cite[Theorem~3.8.8]{Almeida:1994a}, in the sense that there is
no complete and sound finite deductive system for pseudoidentities.
Instead, we invoke the general theory developed
in~\cite[Section~3.8]{Almeida:1994a} for which an alternative has
recently been proposed in the form of a deductive system involving
infinite proofs \cite{Almeida&Klima:2017a}.

We start by recalling some notation and terminology. We fix a sequence
$(x_n)_{n\ge1}$ of distinct variables and we denote by \Om nN the
pro-\pv N semigroup freely generated by $\{x_1,\ldots,x_n\}$. For
$m<n$, we view \Om mN as being naturally embedded in~\Om nN via the
unique continuous homomorphism that sends each $x_i$ ($i=1,\ldots,m$)
to itself. This leads to a topological semigroup \tilom N which is the
inductive limit of the sequence $\Om1N\to\Om2N\to\Om3N\to\cdots$. Note
that, since each \Om nN is the one-point compactification of the
discrete free semigroup $\{x_1,\ldots,x_n\}^+$, obtained by adding a
zero, \tilom N is also obtained from the discrete free semigroup
$\{x_1,x_2,\ldots\}^+$ by adding a zero.

A subset $K$ of~\tilom N is clopen if and only if $K\cap\Om nN$ is
clopen for every $n\ge1$, that is, $K\cap\Om nN$ is a finite subset of
$\{x_1,\ldots,x_n\}^+$ or it is the complement in~\Om nN of such a
subset. In other words, $K$ is clopen in~\tilom N if and only if
$0\notin K$ and each intersection $K\cap\Om nN$ is finite, or $0\in K$
and each set $\{x_1,\ldots,x_n\}^+\setminus K$ is finite.

Note that the space \tilom N is not compact: for instance, the
sequence $(x_n)_n$ has no convergent subsequence. In fact, a sequence
of words $(u_n)_n$ converges in~\tilom N if and only if it is
eventually constant, or there is $N$ such that
$c(u_n)\subseteq\{x_1,\ldots,x_N\}$ for all~$n$ and $|u_n|\to\infty$,
in which case the limit is zero. Indeed, for any sequence $(x_n)_n$ in
\tilom N with only a finite number of terms in each \Om nN, its
complement is an open set in~\tilom N. As another example, the
sequence $(x_1\cdots x_n)_n$ has no convergent subsequence in~\tilom N
even though the length of its terms tends to infinity.
  
As in~\cite[Section~3.8]{Almeida:1994a}, we consider a subset
$\Lambda$ of $\tilom N\times\tilom N$, which is viewed as an arbitrary
set of pseudoidentities for nil semigroups. We say that $\Lambda$ is
\emph{strongly closed} if $\Lambda$ is a fully invariant congruence on
the semigroup~\tilom N and the clopen unions of $\Lambda$-classes
separate the classes of~$\Lambda$.
By~\cite[Corollary~3.8.4]{Almeida:1994a}, the strongly closed sets of
nil pseudoidentities are precisely the sets of all pseudoidentities
that are valid in a pseudovariety of nil semigroups.

\begin{Thm}\label{t:general}
  Let $\Sigma$ be a set of balanced semigroup identities such that the
  variety $[\Sigma]$ is infinitely based. Then, the pseudovariety $\pv
  N\cap\op\Sigma\cl$~is infinitely based.
\end{Thm}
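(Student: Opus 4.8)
The plan is to argue by contradiction: assuming that $\pv V:=\pv N\cap\op\Sigma\cl$ is finitely based, I would produce a finite basis for the variety $[\Sigma]$, contrary to hypothesis. The first step is to bring a hypothetical finite basis of $\pv V$ into a normal form. Since $\pv N=\op x^\omega=0\cl$ is finitely based and $\pv V\subseteq\pv N$, such a basis together with $x^\omega=0$ still defines $\pv V$, so $\pv V=\op x^\omega=0,\Pi\cl$ for a finite set $\Pi$ of pseudoidentities, which we may take to lie in $\Om nS\times\Om nS$ for a single $n$. For a finite nil semigroup $S$, every continuous homomorphism $\Om nS\to S$ factors through the canonical projection $\Om nS\to\Om nN$; hence each $p=q$ in $\Pi$ may be replaced by its image $\bar p=\bar q$ in $\Om nN$ — and thus in $\tilom N$ — without changing $\op x^\omega=0,\Pi\cl$. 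Since $\tilom N=\{x_1,x_2,\dots\}^+\cup\{0\}$, each resulting pseudoidentity either equates two words, or equates a word with $0$, or is trivial.

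The technical heart is to pin down which pseudoidentities over $\tilom N$ are valid in $\pv V$. For this I would exploit the relatively free semigroups of $[\Sigma]$: let $F=A^+/{\sim_\Sigma}$ be the free $[\Sigma]$-semigroup on a finite set $A$, where $\sim_\Sigma$ is the fully invariant congruence generated by $\Sigma$. Because the identities in $\Sigma$ are balanced, $\sim_\Sigma$ preserves the content of a word — the content homomorphism onto the free commutative monoid factors through $\sim_\Sigma$ and is stable under substitution — so $F$ is graded by length with finite homogeneous components. Consequently, for every $k$, the Rees quotient of $F$ modulo the ideal of classes of words of length at least $k$ is a finite nil semigroup which, being a quotient of $F\in[\Sigma]$, lies in $\op\Sigma\cl$ and hence in $\pv V$. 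For $k$ larger than the lengths of the words under consideration, these quotients separate $\sim_\Sigma$-inequivalent words and send no such word to $0$. Two facts follow: no pseudoidentity equating a word with $0$ is valid in $\pv V$; and for words $u,v$ one has $\pv V\models u=v$ if and only if $u\sim_\Sigma v$. Therefore the normal form above reduces to $\pv V=\op x^\omega=0,\Sigma_0\cl=\pv N\cap\op\Sigma_0\cl$, where $\Sigma_0$ is a finite set of balanced identities that are consequences of $\Sigma$; in particular $[\Sigma]\subseteq[\Sigma_0]$.

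It then remains to conclude. If $[\Sigma]=[\Sigma_0]$, then $\Sigma_0$ is a finite basis of $[\Sigma]$, contradicting the hypothesis. Otherwise there is a balanced identity $u'=v'$ holding in $[\Sigma]$ but failing in $[\Sigma_0]$; applying the Rees-quotient construction above to $[\Sigma_0]$ in place of $[\Sigma]$ yields a finite nil semigroup in $\op\Sigma_0\cl$ on which $u'=v'$ fails, that is, a member of $\pv N\cap\op\Sigma_0\cl$ not lying in $\pv N\cap\op\Sigma\cl=\pv V$ — contradicting the equality just obtained. Either way we reach a contradiction, so $\pv V$ is infinitely based.

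I expect the first step — legitimately replacing an arbitrary finite basis of $\pv V$ by balanced word-identities read inside $\tilom N$ — to be the main obstacle, precisely because there is no completeness theorem for pseudoidentities, so the argument must be semantic. One can run it through the structure of the free pro-$\pv N$ semigroups (the one-point compactifications recalled above), or more structurally through the theory of strongly closed sets of nil pseudoidentities in \cite[Section~3.8]{Almeida:1994a}, identifying $\{(p,q):\pv V\models p=q\}$ with the smallest strongly closed subset of $\tilom N\times\tilom N$ containing $\Sigma$. A secondary delicate point is verifying that the Rees quotients satisfy $\Sigma$: this is exactly where balancedness is indispensable, since an unbalanced identity such as $x=x^2$ already fails in all of them.
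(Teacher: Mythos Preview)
Your argument is correct and follows the same overall contradiction strategy as the paper: normalize a hypothetical finite basis of $\pv V=\pv N\cap\op\Sigma\cl$ to the form $\{x^\omega=0\}\cup\Sigma_1$ with $\Sigma_1$ a finite set of balanced identities, and then deduce $[\Sigma_1]=[\Sigma]$. The difference lies in the key technical bridge, namely showing that for words $u,v$ and a set $\Sigma'$ of balanced identities one has $\pv N\cap\op\Sigma'\cl\models u=v$ if and only if $[\Sigma']\models u=v$. The paper establishes this via the machinery of strongly closed sets of nil pseudoidentities from \cite[Section~3.8]{Almeida:1994a}: it identifies the smallest strongly closed set containing $\Sigma'$ with the fully invariant congruence on $\{x_1,x_2,\dots\}^+$ generated by $\Sigma'$ (plus $0=0$), using that balanced classes are finite and hence clopen in~\tilom N. You instead argue semantically, exhibiting explicit finite nil witnesses in $\op\Sigma'\cl$ as Rees quotients of the relatively free $[\Sigma']$-semigroup by its ideals of long words; balancedness is used to ensure these quotients are finite and satisfy $\Sigma'$. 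Your route is more elementary and entirely self-contained, sidestepping the topological apparatus that the paper itself flags as necessary in the absence of a completeness theorem; the paper's route is more conceptual and makes transparent exactly which closure property of the congruence is being invoked. Either way the conclusion $[\Sigma_1]=[\Sigma]$ drops out immediately, and your final paragraph correctly identifies that both lines are available.
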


\begin{proof}
  Suppose that $\pv W=\pv N\cap\op\Sigma\cl$ admits a finite basis
  $\Sigma_0$ of pseudoidentities. We may as well assume that the
  pseudoidentity $x^\omega=0$ belongs to~$\Sigma_0$. In the presence
  of that pseudoidentity, every pseudoidentity that is not its
  consequence is equivalent to either an identity $u=v$ or to a
  pseudoidentity of the form $u=0$, where $u$ is a word. The latter
  case is excluded for pseudoidentities in~$\Sigma_0$ because no such
  pseudoidentity is valid in~\pv W since \pv W contains all monogenic
  aperiodic semigroups. In the former case, if the identity $u=v$ is
  not balanced then, together with the pseudoidentity $x^\omega=0$, it
  entails a pseudoidentity of the form $w=0$, for some word $w$, which
  has already been excluded. Thus, we may as well assume that
  $\Sigma_0$ consists of the pseudoidentity $x^\omega=0$ together with
  a finite set $\Sigma_1$ of balanced identities.

  By the above cited results from~\cite[Section~3.8]{Almeida:1994a},
  an identity $u=v$ is valid in $\pv W=\pv N\cap\op \Sigma_1\cl$, if
  and only if it belongs to every strongly closed set of nil
  pseudoidentities containing~$\Sigma_1$. We claim that the smallest
  strongly closed set of nil pseudoidentities containing $\Sigma_1$ is
  the fully invariant congruence $\Lambda$ on $\{x_1,x_2,\ldots\}^+$
  generated by~$\Sigma_1$ together with the trivial pseudoidentity
  $0=0$. Indeed, since the set $\theta=\Lambda\cup\{0=0\}$ is
  certainly contained in every fully invariant congruence on~\tilom N
  containing~$\Sigma_1$, it suffices to show that $\theta$ is strongly
  closed. For this purpose, we start by noting that the
  $\theta$-classes are the $\Lambda$-classes, which are finite sets
  since they consist of words of equal length involving the same
  letters, together with the singleton set $\{0\}$. Since finite
  subsets of $\{x_1,x_2,\ldots\}^+$ are clopen in~\tilom N, to
  separate a class $C$ different from $\{0\}$ from any other class by
  a clopen union of classes, it suffices to take $C$ itself.

  We thus conclude that an identity $u=v$ is valid in the
  pseudovariety $\pv N\cap\op\Sigma_1\cl$ if and only if it is valid
  in the variety $[\Sigma_1]$. In particular, we obtain the equality
  $[\Sigma_1]=[\Sigma]$, which contradicts the assumption that the
  variety $[\Sigma]$ is infinitely based. Hence, \pv W is infinitely
  based.
\end{proof}

Note that the rank of $\pv N\cap\op\Sigma\cl$ is at most $\max\{2,n\}$,
where $n$ is the rank of the variety~$[\Sigma]$. Hence, combining
Theorem~\ref{t:general} with examples of infinitely based varieties of
finite rank, we obtain the following result, which is the main purpose
of this note.

\begin{Cor}
  \label{c:main}
  There are pseudovarieties of (nil) semigroups that have finite rank
  and are infinitely based.
\end{Cor}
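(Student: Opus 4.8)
The plan is to read the corollary off from Theorem~\ref{t:general} together with the rank estimate recorded just after it, once a suitable input variety has been located. As noted in the discussion preceding Theorem~\ref{t:general}, several of the entries of \cite[Table~1.3]{She-Vol} are non-finitely based systems $\Sigma$ of balanced semigroup identities using only a small finite number of variables. Fixing one such $\Sigma$, say in at most $n$ variables, we have that $[\Sigma]$ is infinitely based by the choice of $\Sigma$, and that $[\Sigma]$ has rank at most $n$ by Proposition~\ref{p:folklore}, since $\Sigma$ is itself a basis of $[\Sigma]$ in at most $n$ variables.

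Feeding this $\Sigma$ into Theorem~\ref{t:general} gives that the pseudovariety $\pv W=\pv N\cap\op\Sigma\cl$ is infinitely based, and $\pv W$ consists of nil semigroups by construction. Finally, $\pv W$ has finite rank: by the remark after the theorem its rank is at most $\max\{2,n\}$ --- testing $\max\{2,n\}$-generated subsemigroups suffices because membership in $\pv N$ is a condition of rank at most $2$ and satisfaction of $\Sigma$ is a condition of rank at most $n$, the latter since $[\Sigma]$ has rank at most $n$. Thus $\pv W$ is a pseudovariety of nil semigroups that has finite rank and is infinitely based, which is the assertion.

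Since the corollary is a formal consequence of Theorem~\ref{t:general} and the accompanying rank bound, the only point demanding real attention is the realizability of the hypothesis, namely the existence of an infinitely based variety of semigroups whose defining identities are balanced and whose rank is finite. This is delivered by the cited table; if one prefers an intrinsic justification, note that being balanced is preserved by every rule of equational deduction, so a variety defined by balanced identities satisfies only balanced identities and hence, if its rank is $n$, admits a basis formed by the balanced identities it satisfies in at most $n$ variables. Consequently any finite-rank, infinitely based, balanced-identity variety found in the literature may be taken as $[\Sigma]$, and the argument above applies unchanged.
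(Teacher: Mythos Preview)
Your argument is correct and follows essentially the same route as the paper: you invoke Theorem~\ref{t:general} together with the rank bound $\max\{2,n\}$ stated immediately after it, and feed in an infinitely based variety defined by balanced identities in finitely many variables drawn from the literature. The paper does exactly this, citing the concrete families $[yx^ny=xyx^{n-2}yx\ (n\ge3)]$ and $[xyx^ny=yx^nyx\ (n\ge2)]$ in place of your generic reference to \cite[Table~1.3]{She-Vol}; your additional paragraph on preservation of balancedness under equational deduction is sound but not needed for the corollary itself.
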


For instance, since the varieties $[yx^ny=xyx^{n-2}yx\ (n\ge3)]$ and
$[xyx^ny=yx^nyx\ (n\ge2)]$ are infinitely based (see, respectively,
\cite{Per} and~\cite{Lyapin}), their intersections with \pv N are also
infinitely based but both have rank two.

Note that the examples of infinitely based finite rank pseudovarieties
resulting from Theorem~\ref{t:general} are not locally finite. We do
not know if there are any locally finite infinitely based finite rank
pseudovarieties or even varieties with the same properties.

\subsubsection*{Acknowledgments}
We greatly appreciate the comments of the Editor.
This work was supported, in part, by CMUP (UID/MAT/ 00144/2013), which
is funded by FCT (Portugal) with national (MCTES) and European
structural funds (FEDER), under the partnership agreement PT2020.
The work of the second author was also partly supported by the FCT
post-doctoral scholarship SFRH/BPD/89812/2012.

\bibliographystyle{amsplain}%
\bibliography{ibfr_20}

\end{document}